\title[A small hyperbolic Kazhdan group]{A sixteen-relator presentation of\\ an infinite hyperbolic Kazhdan group}
\author[P.-E. Caprace]{Pierre-Emmanuel Caprace}
\thanks{P.-E.C. is a F.R.S.-FNRS senior research associate, supported in part by EPSRC grant no EP/K032208/1.}
\address{Universit\'e catholique de Louvain, IRMP, Chemin du Cyclotron 2, bte L7.01.02, 1348 Louvain-la-Neuve, Belgique}
\email{pe.caprace@uclouvain.be}
\date{August 31, 2017}
\newtheorem{thm}{Theorem}
\newtheorem{prop}[thm]{Proposition}
\newtheorem{cor}[thm]{Corollary}
\theoremstyle{definition}
\newtheorem{rmk}[thm]{Remark}
\newtheorem{example}[thm]{Example}
\newcommand{\triv}{\{1\}}
\begin{document}

\begin{abstract} 
We provide an explicit presentation of an infinite hyperbolic Kazhdan group with $4$ generators and $16$ relators of length at most $73$.	That group acts properly and cocompactly on a hyperbolic triangle building of type $(3,4,4)$. We also point out a variation of the construction that yields examples of lattices in $\tilde A_2$-buildings admitting non-Desarguesian residues of arbitrary prime power order. 
\end{abstract}
\maketitle


\section{Hyperbolic Kazhdan groups}

\bigskip

\begin{flushright}
\itshape\small
On ne peut pas faire plus concis!\upshape\\
\smallskip
Raymond Devos, \emph{Mati\`ere \`a rire}, 1991
\end{flushright}

\bigskip

The existence of infinite Gromov hyperbolic groups enjoying Kazhdan's property~(T) has been known since the origin of the theory of hyperbolic groups, as a combination of the following  results.
\begin{itemize}
	\item Every simple Lie group possesses a cocompact lattice, by \cite{Borel}; 
	
	\item the rank one simple Lie groups  $\mathrm{Sp}(n, 1)$ (with $n \geq 2$) and  $F_4^{-20}$ have (T), by \cite{Kostant} (see also \cite[\S3.3]{BHV});
	
	\item if a locally compact group $G$ has property (T), then so does every lattice $\Gamma$ in $G$ by \cite{Kazhdan} (see also  \cite[Theorem~1.7.1]{BHV});
	
	\item a cocompact lattice in a rank one simple Lie group is Gromov hyperbolic, since it is virtually the fundamental group of a closed Riemannian manifold of negative sectional curvature, see  \cite{Gromov}. 	
\end{itemize}
The smallest known dimension of a negatively curved closed manifold $M$ such that $\pi_1(M)$ has (T) is~$8$ (namely when $M$ is covered by the symmetric space of $\mathrm{Sp}(2, 1)$), and I am not aware of any known explicit presentation of the fundamental group $\pi_1(M)$ in that case. This  is a very interesting and natural problem. By the Hyperbolization Theorem (see  \cite[Theorem~1.7.5]{AFW} and references therein),  the fundamental group of a negatively curved closed manifold $M$ of dimension~$1, 2$ or $3$ is a lattice in $\mathbf R$ or  $O(2, 1)$ or $O(3,1)$. Therefore it cannot be a Kazhdan group by \cite[Theorem~2.7.2]{BHV} (see also \cite{Fujiwara} for a more general result on the failure of property (T) for $3$-manifold groups). Whether there exists a negatively curved closed manifold $M$ of dimension $4, 5, 6$ or $7$ with a Kazhdan fundamental group is another intriguing open problem.  M.~Kapovich pointed out to me that the  related problem of finding objects of either of the following kinds, is also open:
\begin{itemize}
	\item a nonpositively curved closed  manifold, not homeomorphic to a locally symmetric space, and   with a Kazhdan fundamental group; 
 \item  a  Kazhdan Poincar\'e duality group not isomorphic to a lattice in a  connected Lie group. 	
\end{itemize}

The possibility to write down an explicit presentation of an infinite hyperbolic Kazhdan group was first realized in \cite[Corollary~2]{BaSw}, where the geometric approach to property (T) via the spectral gap of finite graphs is exploited (see \cite[Chapter~5]{BHV} for an exposition of that approach including a historical account).  The graphs used in \cite{BaSw} are certain Cayley graphs of $\mathrm{SL}_2(\mathbf Z/n\mathbf Z)$, which satisfy the required spectral gap condition for $n$ sufficiently large. An alternative source of finite Cayley graphs that enjoy the required spectral condition is suggested by A.~Valette in his review of \cite{BaSw}, but I am not aware of any reference where that suggestion was incarnated into an explicit presentation of a hyperbolic Kazhdan group.  A different construction is highlighted by M.~
Bourdon in \cite[\S1.5.3]{Bourdon}. It gives rise to cocompact lattices in certain Gromov hyperbolic Fuchsian buildings, and also relies on the geometric approach to property (T). The advantage is that the finite graphs on which the spectral gap condition is tested are finite generalized polygons, and the eigenvalues of their incidence matrix is explicitly known by classical results from \cite{FH}. Nevertheless, the corresponding group presentations one obtains from that construction take several hundreds relations. The variations on Bourdon's construction described in \cite{Swiat} also seem to require a rather large number of relators.  Other examples of infinite hyperbolic Kazhdan groups are studied in \cite{LMW}, but no explicit short presentation is recorded there.

Cornelia Drutu asked me whether it was possible to use buildings in order to construct an explicit short presentation of an infinite hyperbolic group with Kazhdan's property (T). As explained   in \cite[Section~19.8]{DK}:  ``\textit{while `generic' finitely presented groups are infinite and satisfy Property (T), finding explicit and reasonably short presentations presents a bit of a challenge}''. In that context,  targeting hyperbolic buildings   is  especially natural in view of the fact that there exist five-relator presentations of infinite Kazhdan groups acting properly and cocompactly on buildings of type $\tilde A_2$, see \cite[Examples following Theorem~5.8]{Essert}. Note that  those groups cannot be hyperbolic since they are quasi-isometric to a $2$-dimensional Euclidean building.  The shortest presentation I could find in attempting  to answer her question is   the following. 

\begin{thm}\label{thm:main}
	The group 
\begin{align*}
E = \langle x, y, z, t, r \; | \; & x^7, y^7, [x, y]z^{-1}, [x, z], [y, z], \\
& t^2, r^{73}, trtr, \\
&	[x^{2}yz^{-1}, t], [xyz^3, tr], [x^{3}yz^{2}, tr^{17}], \\
& [x, tr^{-34}], [y, tr^{-32}],  [z, tr^{-29}], \\
&[x^{-2}yz, tr^{-25}], [x^{-1}yz^{-3}, tr^{-19}], [x^{-3}yz^{-2}, tr^{-11}] \rangle,
	\end{align*}
	is an infinite Gromov hyperbolic group enjoying Kazhdan's property (T). It acts faithfully, properly, cocompactly (not type-preservingly) on a thick hyperbolic Fuchsian building of type $(3, 4, 4)$. In particular $E$ is quasi-isometrically rigid by \cite{Xie}. 
\end{thm}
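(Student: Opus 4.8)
The plan is to realise $E$ as the fundamental group (colimit) of a developable complex of groups supported on a single hyperbolic triangle $\Delta$ with angles $\pi/3,\pi/4,\pi/4$, and to identify its development with the building $X$ in the statement. I would organise the presentation into its two visible blocks and read off the local (rank-two) structure at the three corners of $\Delta$. The dihedral group $\langle t,r\mid t^2,r^{73},(tr)^2\rangle\cong D_{73}$ governs the residue at the $\pi/3$-corner: since $73=8^2+8+1$, its cyclic part $\langle r\rangle\cong\mathbf Z/73$ is a Singer cycle acting regularly on the $73$ points of the projective plane of order $8$, the incidences being recorded by a perfect difference set and $t$ realising a polarity, so this residue is a thick generalized $3$-gon. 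The extraspecial group $\langle x,y,z\mid x^7,y^7,z=[x,y],[x,z],[y,z]\rangle\cong 7^{1+2}$ (here $z^{7}=[x,y]^{7}=[x^{7},y]=1$ is automatic, so the order is $343$), together with the nine commutator relators $[w_i,tr^{k_i}]$ — exactly nine, matching the valency $s+1=9$ of the order-$8$ plane — encodes the generalized $4$-gon residues at the two $\pi/4$-corners and, crucially, prescribes how a $3$-gon residue and a $4$-gon residue are amalgamated along their common panels. Tietze-eliminating $z=[x,y]$ returns the four-generator, sixteen-relator form of the abstract, and I would check that the colimit of this triangle of groups is exactly $E$.

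Granting this identification, I would feed the triangle of groups into the nonpositive-curvature machinery for complexes of groups (Gersten--Stallings for triangles of groups, and Haefliger / Bridson--Haefliger in general). The Gersten--Stallings angle at a corner whose link is a generalized $m$-gon is $\pi/m$, so the angles are $\pi/3,\pi/4,\pi/4$ with sum $5\pi/6<\pi$; the complex is therefore nonpositively — indeed strictly negatively — curved, hence developable, and its development $X$ is a complete $\mathrm{CAT}(-1)$ space carrying a proper and cocompact action of $E$. Faithfulness follows from the faithful action of the local groups on their residues together with triviality of the kernel of the development; the apartments of $X$ are copies of the infinite $(3,4,4)$-tiling of $\mathbb{H}^2$, so $X$ is unbounded and $E$ is infinite; and $E$, acting geometrically on a $\mathrm{CAT}(-1)$ space, is Gromov hyperbolic by \cite{Gromov}. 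To see that $X$ is genuinely a thick building rather than merely a negatively curved triangle complex, I would invoke the local recognition principle of Tits and Ronan: a simply connected, residually connected chamber system all of whose rank-two residues are generalized polygons is a building — simple connectivity being supplied by the $\mathrm{CAT}(0)$ geometry, thickness by thickness of the links. The two $\pi/4$-corners carry isomorphic residues swapped by a symmetry realised inside $E$, which accounts for the action being non-type-preserving.

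For Kazhdan's property (T) I would appeal to the local spectral criterion of Garland, Ballmann--\'Swi\k{a}tkowski and \.Zuk \cite{BaSw}: a group acting properly and cocompactly on a simply connected two-dimensional simplicial complex has (T) provided every vertex link is connected and the least positive eigenvalue $\lambda_1$ of its normalised Laplacian exceeds $1/2$. The links are the incidence graphs of the residues, whose spectra are classical Feit--Higman data \cite{FH}: for the generalized $3$-gon of order $8$ the nontrivial adjacency eigenvalue is $2\sqrt2$, whence $\lambda_1=1-\tfrac{2\sqrt2}{9}>\tfrac12$, and the analogous bound for the generalized $4$-gons holds with only a narrow margin — it is precisely this margin that forces the residues to be taken over a field as large as the present one, and hence accounts for the prime $73$ and the relator lengths. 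This yields (T), and quasi-isometric rigidity then requires no further work: the proper cocompact action on the Fuchsian building places $E$ squarely within the hypotheses of Xie's theorem \cite{Xie}, giving the final assertion at once. The step I expect to be genuinely delicate is the very first one — certifying that the nine incidence relators, with their specific exponents, really do amalgamate the $3$-gon and $4$-gon residues into one consistent, simply connected chamber system with the prescribed generalized polygons as residues; once that combinatorial coherence is in place, the curvature, spectral, and rigidity steps are comparatively routine given the classical data.
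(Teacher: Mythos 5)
Your outline gets the global architecture right (complex of groups $\to$ CAT($-1$) development $\to$ Tits' local recognition $\to$ spectral criterion $\to$ Xie), but the foundational step is wrong as set up, and it cannot be repaired within the framework you chose: $E$ is \emph{not} the fundamental group of a complex of groups over a single $(\pi/3,\pi/4,\pi/4)$-triangle. Three concrete obstructions. First, a triangle of groups whose development is a thick $(3,4,4)$-building needs a vertex group at \emph{each} of the two $\pi/4$-corners whose coset geometry is a generalized quadrangle; your presentation supplies only one Heisenberg group, so the colimit of such a triangle of groups would be generated by $D_{146}$ together with \emph{two} copies of $H(\mathbf F_7)$, and is not $E$. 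Second, chamber-transitivity fails numerically: the stabilizer in $E$ of the $\pi/3$-vertex is $D_{146}$, which acts simply transitively on the $146$ vertices of its link (that is exactly the Cayley-graph statement of Proposition~\ref{prop:Dihedral}(i)), hence cannot act transitively on the $657$ flags of $\mathrm{PG}(2,8)$; one computes that $E$ has nine orbits of chambers, each chamber being stabilized by an involution (a conjugate of some $tr^{\sigma_i}$) that reflects it across the bisector from its $\pi/3$-corner. Third, your own closing remark --- that the two $\pi/4$-corners are ``swapped by a symmetry realised inside $E$'' --- contradicts your setup: in the development of a complex of groups, vertices lying over distinct vertices of the quotient lie in distinct $E$-orbits, so no element of $E$ can swap the two corner types of a single-triangle quotient. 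The swap is real (it is exactly why the action is not type-preserving), and precisely for that reason the quotient $E\backslash X$ is not a chamber but a chamber folded in half. The correct quotient datum, which the presentation transcribes verbatim, is a complex of groups over \emph{nine} $(\pi/2,\pi/4,\pi/6)$-triangles $abc_1,\dots,abc_9$ glued along the common hypothenuse $[ab]$, with $E_a=\langle t,r\rangle\cong D_{146}$, $E_b=\langle x,y\rangle\cong H(\mathbf F_7)$, and $E_{c_i}\cong \mathbf Z/7\times\mathbf Z/2$ the subgroups declared abelian by the nine commutator relators; the development is then a \emph{non-thick} building of type $(2,4,6)$, and the thick $(3,4,4)$-building of the statement appears only after discarding the thin edges. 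Your single-triangle model has no room for this two-stage passage.

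Beyond this, the step you explicitly defer as ``genuinely delicate'' is the actual mathematical content of the paper, and your framework gives no purchase on it: one must check that $\{0,1,17,39,41,44,48,54,62\}$ is a perfect difference set in $\mathbf Z/73\mathbf Z$ (Example~\ref{ex:DiffSet}), so that the Cayley and coset graphs of $D_{146}$ with respect to $\{tr^{\sigma}\}$ are the incidence graph of $\mathrm{PG}(2,8)$ and its barycentric subdivision (Proposition~\ref{prop:Dihedral}), and that the nine cyclic subgroups $\langle x^2yz^{-1}\rangle,\dots,\langle x^{-3}yz^{-2}\rangle$ coincide with Payne's family $\{\langle x\rangle,\langle z\rangle\}\cup\{\langle x^ay z^{-a/2}\rangle\}$, whose coset graph is a generalized quadrangle of order $(8,6)$ by Kantor's criterion (Proposition~\ref{prop:Kantor}, Corollary~\ref{cor:Heisen}). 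On the positive side, your property~(T) step is sound and genuinely different from the paper's: both links of the $(3,4,4)$-building have $\lambda_1>1/2$, namely $1-2\sqrt2/9$ for the plane and $1-\sqrt2/3\approx 0.529$ for the quadrangle, so the Ballmann--\'Swiatkowski criterion \cite{BaSw} applies directly, whereas the paper invokes Oppenheim's averaged criterion \cite{Opp} (which buys the wider range $p\geq 6$ used for the variant constructions; at $p=7$ your route suffices). Hyperbolicity, infiniteness and quasi-isometric rigidity are likewise fine as you argue them. But all of this hangs on a decomposition of $E$ that does not exist; replace your single triangle by the nine-triangle $(2,4,6)$ complex above and your outline becomes, in substance, the paper's proof.
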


In view of the relation $[x, y] =z $, the generator $z$ is redundant, and the  presentation of $E$ given in Theorem~\ref{thm:main} is equivalent to  a presentation with $4$ generators and $16$ relators. This modification increases the length of some of the relators, but one checks that the maximal length of a relator in that $16$-relator presentation of $E$ remains equal to $73$. 

The group $E$ may be viewed as the fundamental group of a simple complex of finite groups, in the sense of \cite[Chapter~II.12]{BH}. The underlying simplicial complex $\mathcal Y$ has $11$ vertices, denoted by $a, b, c_1, \dots, c_9$. It has  $19$ edges and $9$ faces, spanned by $abc_i$ for $i = 1, \dots, 9$. As a metric space, it can be viewed as   nine hyperbolic triangles with angles $\pi/2, \pi/4, \pi/6$, glued along their hypothenuse $[ab]$ (the triangle $abc_i$ is depicted in Figure~\ref{fig:triangle}). The angle at $a$ is $\pi/6$ and the angle at $b$ is $\pi/4$. The vertex group $E_a$ is the dihedral group of order $146$; in the above presentation of $E$ it is generated by $r$ and $t$. The vertex group $E_b$ is the Heisenberg group over $\mathbf F_7$, of order $343$; in the presentation above it is generated by $x$ and $y$.  The vertex group $E_{c_i}$ is the cyclic group of order~$14$  for $i = 1, \dots, 9$. The edge groups $E_{ac_i}$ (resp. $E_{bc_i}$) are cyclic of order $2$ (resp. $7$); they are generated by $t, tr, tr^{17}, tr^{-34}, tr^{-32}, tr^{-29}, tr^{-25}, tr^{-19}$ and $tr^{-11}$ (resp. $x^{2}yz^{-1}, xyz^3, x^{3}yz^{2},x, y, z,  x^{-2}yz, x^{-1}yz^{-3}$ and $x^{-3}yz^{-2}$). The edge group $E_{ab}$ and the face groups $E_{abc_i}$ are trivial.

	\begin{center}
\begin{figure}[h]
	\includegraphics[width=10cm]{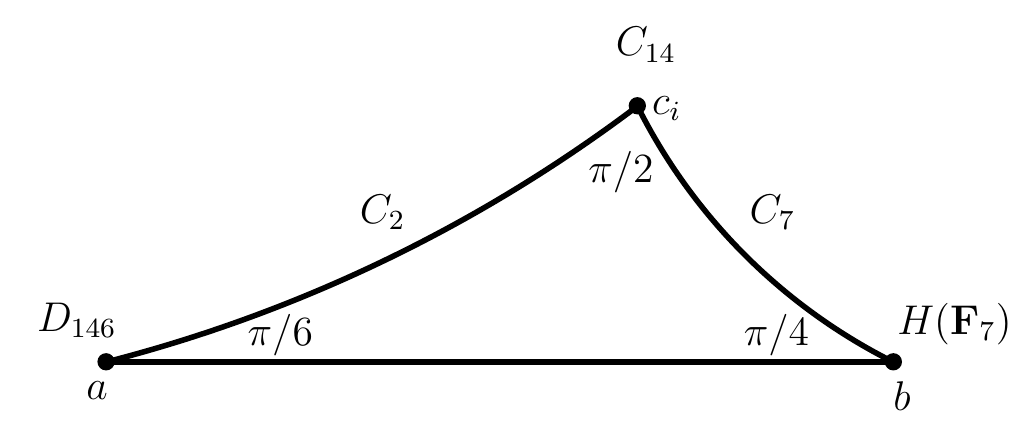}
	\caption{The triangle $abc_i$} \label{fig:triangle}
\end{figure}
	\end{center}

The proof of Theorem~\ref{thm:main} consists of the following steps. 

\begin{itemize}
	\item The link at $a$ in the local development of $\mathcal Y$ around $a$ is the first barycentric subdivision of the incidence graph of the projective plane over the field $\mathbf F_8$ of order $8$.  In particular that link has girth~$12$, so the Link Condition is satisfied at $a$. This step is achieved by Example~\ref{ex:DiffSet} and Proposition~\ref{prop:Dihedral} below. 
	
	\item The link at $b$ in the local development of $\mathcal Y$ around $b$ is the incidence graph of a generalized quadrangle of order~$(8, 6)$. In particular that link has girth~$8$, so the Link Condition is satisfied at $b$. This step is achieved by Corollary~\ref{cor:Heisen} below. 
	
	\item The link at $c_i$ is the complete bipartite graph $K_{2, 7}$, its  girth is~$4$, so the Link Condition is satisfied at $c_i$ for every $i$.
	
	\item By the first three steps, the complex of groups $\mathcal Y$ is developable, and its universal cover $\widetilde{\mathcal Y}$ is a simplicial complex carrying a CAT($-1$) metric on which the fundamental group $E$ acts faithfully, properly and cocompactly by isometric automorphisms, see \cite[Theorem~II.12.28]{BH}. Moreover, since all the links of  $\widetilde{\mathcal Y}$ are $1$-dimensional spherical buildings, it follows from \cite[Theorem~1]{Tits} that $\widetilde{\mathcal Y}$ is a $2$-dimensional building. It is a hyperbolic Fuchsian building of type $(2, 4, 6)$ and the $E$-action on it is type-preserving. However $\widetilde{\mathcal Y}$ is not thick as a building of type $(2, 4, 6)$. If one discards the edges of  $\widetilde{\mathcal Y}$ contained in exactly two $2$-simplices, then  $\widetilde{\mathcal Y}$  becomes a thick building of type $(3, 4, 4)$ on which $E$ acts in a non-type-preserving way. 
	
	\item The final step is to check that $E$ has property (T). This is achieved  using a straightforward computation based on the criterion established by I.~Oppenheim in \cite{Opp}, see  Proposition~\ref{prop:T} below.  This completes the proof of Theorem~\ref{thm:main}.
\end{itemize}

\section{Projective planes and dihedral groups}

We recall that a graph is the incidence graph of a projective plane if and only if it is bipartite, has diameter~$3$ and girth~$6$. We also recall that a \textbf{difference set} in a group $G$ is a subset $\Delta$ of $G$ such that every non-trivial element $g$ of $G$ can be written in a unique way as $g = \sigma^{-1}\tau$ with $\sigma , \tau \in \Delta$. Notice that $G$ must have order $q^2 +q+1$ where $q = |\Delta|-1$. 

\begin{example}\label{ex:DiffSet}
	The set    
	$$\begin{array}{rcl}
	\Delta &= &\{0, 1, 17,39, 41, 44, 48, 54, 62\} \\
	&=& \{0, 1, 17, -34, -32, -29, -25, -19, -11\}
	\end{array}$$ 
	is a difference set in the cyclic group $\mathbf Z/73\mathbf Z$. 	
\end{example}

Every difference set in a group $G$  gives rise to a projective plane; conversely, every projective plane admitting an automorphism group acting sharply transitively on its points gives rise to a difference set. Moreover, for every prime power $q$, the \textbf{Desarguesian plane of order~$q$}, i.e. the projective plane over the field $\mathbf F_q$ of order $q$, has a cyclic automorphism group acting sharply transitively on its points. In particular the cyclic group of order $q^2 +q+1$ has a difference set. We refer to \cite[Lemma~D.1]{BCL} and \cite[pp. 105--106]{Dembowski} for proofs of those assertions. 

Given a group $G$ and a collection $\{P_i \mid i \in I\}$ of subgroups of $G$, the \textbf{coset graph} of $G$ with respect to $\{P_i \mid i \in I\}$ is the bipartite graph whose vertex set is the union of $G$ with $\bigcup_{i \in I} G/P_i$, and where the element $g \in G$ forms an edge with the coset $hP_i$ if and only if $g \in hP_i$. 

\begin{prop}\label{prop:Dihedral}
Let $q$ be a prime power  and let $n = q^2 + q +1$. Let $D_{2n} = \langle r, t \mid r^n, t^2, trtr \rangle$ be the dihedral group of order $2n$, and let $\Delta$ be a difference set in the cyclic group $\mathbf Z/n\mathbf Z$. Then:
\begin{enumerate}[(i)]
\item The Cayley graph of $D_{2n}$ with respect to the set $\{tr^{\sigma} \mid \sigma   \in \Delta\}$ is the incidence graph  of a  projective plane of order $q$. 
\item  The coset graph of $D_{2n}$ with respect to the subgroups $\{\langle tr^{\sigma} \rangle \mid \sigma   \in \Delta\}$ is the first barycentric subdivision of the  incidence graph of a  projective plane of order $q$.  
\end{enumerate}

\end{prop}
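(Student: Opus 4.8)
The plan is to translate everything into explicit arithmetic inside $D_{2n}$ and then recognise the resulting bipartite graph as the development of the difference set $\Delta$. First I would record the basic identities in $D_{2n}$: the relation $trtr = 1$ gives $r^a t = t r^{-a}$, whence each generator $tr^\sigma$ is an involution (so the generating set $S = \{tr^\sigma \mid \sigma \in \Delta\}$ satisfies $S = S^{-1}$ and contains no element equal to $1$, and the Cayley graph is a genuine simple $(q+1)$-regular graph), and moreover $tr^\sigma \cdot tr^{\sigma'} = r^{\sigma' - \sigma}$. Since multiplying by an element of $S$ always exchanges a rotation $r^i$ with a reflection $tr^j$, the vertex set splits into the $n$ rotations $R = \{r^i\}$ and the $n$ reflections $T = \{tr^j\}$, exhibiting the graph as bipartite; a direct computation gives the incidence rule that $r^i$ is adjacent to $tr^j$ precisely when $i + j \in \Delta$. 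The identity $tr^\sigma \cdot tr^{\sigma'} = r^{\sigma'-\sigma}$ together with the difference-set property (every nonzero element of $\mathbf{Z}/n\mathbf{Z}$ is a difference of two elements of $\Delta$) also shows that $S$ generates $D_{2n}$, so the graph is connected.

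Next I would identify this bipartite graph with an incidence graph. Sending $r^i$ to the point $i \in \mathbf{Z}/n\mathbf{Z}$ and $tr^j$ to the block $\Delta - j$, the incidence rule $i + j \in \Delta$ becomes $i \in \Delta - j$, i.e.\ point-in-block incidence in the development of $\Delta$. The one point that needs care is that the $n$ translates $\Delta - j$ are pairwise distinct: if $\Delta + m = \Delta$ then translation by $m$ permutes $\Delta$, so the order of $m$ divides both $|\Delta| = q+1$ and $n = q^2 + q + 1$; as $q^2 + q + 1 = q(q+1) + 1$ these are coprime, forcing $m = 0$. Hence the blocks are exactly the $n$ translates of $\Delta$, and by the difference-set condition they form a $2$-$(n, q+1, 1)$ design, i.e.\ a projective plane of order $q$ (this is the development recalled after Example~\ref{ex:DiffSet}). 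Its incidence graph is the bipartite graph just described, proving (i). As a sanity check one may instead verify directly that this graph is bipartite of diameter $3$ and girth $6$ --- the absence of $4$-cycles is immediate from the uniqueness clause in the definition of a difference set --- so that the characterisation recalled above applies equally well.

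For (ii) I would argue that the coset graph is literally the first barycentric subdivision of the Cayley graph of (i). Since each $tr^\sigma$ is an involution, $\langle tr^\sigma \rangle = \{1, tr^\sigma\}$ has order $2$, so each coset $g\langle tr^\sigma\rangle = \{g, g\, tr^\sigma\}$ is precisely the (unordered) pair of endpoints of the edge of the Cayley graph joining $g$ to $g\, tr^\sigma$. A short count confirms this is a bijection between cosets and edges: there are $q+1$ subgroups, each of index $n$, hence $(q+1)n$ cosets, matching the $(q+1)\cdot 2n/2 = (q+1)n$ edges of the $(q+1)$-regular graph on $2n$ vertices. In the coset graph a group element $g$ is joined to a coset $C$ exactly when $g \in C$; since $|C| = 2$, each coset-vertex is joined to exactly the two endpoints of its edge. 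Thus the coset graph is obtained from the Cayley graph by inserting one new vertex in the interior of each edge and joining it to the two original endpoints, which is exactly the first barycentric subdivision, proving (ii).

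The main obstacle is not any single hard estimate but getting the bookkeeping exactly right: carrying out the $D_{2n}$-arithmetic so that the incidence rule $i + j \in \Delta$ comes out correctly, and above all verifying that the translates $\Delta - j$ are genuinely distinct (the coprimality of $q+1$ and $q^2+q+1$), since without this the development would not be a projective plane and the identification in (i) would break down. Everything else is routine once the bipartite structure and the involution property of the generators are in hand.
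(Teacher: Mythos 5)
Your proof is correct, but for part (i) it takes a genuinely different route from the paper's. The paper never identifies \emph{which} plane arises: it verifies the graph-theoretic characterization recalled at the start of Section~2 (bipartite, diameter~$3$, girth~$6$). There, bipartiteness comes from the quotient $D_{2n}/\langle r\rangle$, the uniqueness clause in the definition of a difference set rules out $4$-cycles (a relation $1 = tr^{\sigma_1}tr^{\sigma_2}tr^{\sigma_3}tr^{\sigma_4}$ forces backtracking), and a counting argument — the $1+q(q+1)$ vertices at distance $\leq 2$ from a given vertex exhaust one side of the bipartition, since $|D_{2n}| = 2(q^2+q+1)$ — yields diameter~$3$ and girth exactly~$6$. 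You instead construct an explicit isomorphism between the Cayley graph and the incidence graph of the development of $\Delta$, and then invoke the design-theoretic fact (recalled in the paper after Example~\ref{ex:DiffSet}, with references) that this development is a projective plane of order $q$. Your route requires the extra verification that the translates $\Delta - j$ are pairwise distinct; your coprimality argument via $\gcd(q+1,\, q^2+q+1)=1$ is correct, and note it can be had even more cheaply from the uniqueness clause, since $\Delta + m = \Delta$ with $m \neq 0$ produces $q+1 \geq 2$ distinct representations of $m$ as a difference of elements of $\Delta$. What your approach buys is concreteness: it pins down the plane as the development of $\Delta$, which is more than the paper's proof of Proposition~\ref{prop:Dihedral} extracts, and which is exactly the kind of information used in the outline of the proof of Theorem~\ref{thm:main}, where the link at $a$ is asserted to be the plane over $\mathbf F_8$. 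What the paper's approach buys is brevity and independence from the development fact and the distinctness issue. For part (ii) the two arguments coincide — the coset graph is the first barycentric subdivision of the Cayley graph — and your coset--edge bijection supplies details the paper leaves to the reader. One caveat: your parenthetical ``sanity check'' (verifying bipartite, diameter~$3$, girth~$6$ directly) is not complete as stated, because the diameter-$3$ claim requires the counting step above rather than following immediately; since your main argument is complete, this does not affect correctness.
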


\begin{proof}
Since any reflection in $D_{2n}$ has non-trivial image in the quotient $D_{2n}/\langle r \rangle$, it follows that any loop in the Cayley graph $\mathcal G$ of $D_{2n}$ with respect to the set $\{tr^{\sigma} \mid \sigma   \in \Delta\}$ has even length. In particular $\mathcal G$ is bipartite. If $\mathcal G$ contains a loop of length~$4$ through the identity, then there exist $\sigma_1, \dots, \sigma_4 \in \Delta$ with $1 = tr^{\sigma_1} t r^{\sigma_2} t r^{\sigma_3} t r^{\sigma_4}$. Hence $r^{-\sigma_1 + \sigma_2} r^{-\sigma_3 +\sigma_4} = 1$. Since $\Delta$ is a difference set, we must have $\sigma_1 = \sigma_4$ and $\sigma_2 = \sigma_3$, so that the loop was a backtracking path. Thus $\mathcal G$ has girth at least~$6$.  Observing that $\mathcal G$  is a vertex-transitive bipartite graph of degree~$q+1$, we infer   that the  the total number of vertices at distance exactly~$2$ from the identity vertex in  $\mathcal G$ is $q(q+1)$. Since the total number of vertices of $\mathcal G$ is  $2(q^2+q+1)$ and since $\mathcal G$ is bipartite, we deduce that $\mathcal G$ has diameter~$3$ and girth~$6$. This proves assertion~(i). Assertion~(ii) follows from (i) since the coset graph in question is the first barycentric subdivision of $\mathcal G$.
\end{proof}

\section{Generalized quadrangles and Heisenberg groups}

We recall that a graph is the incidence graph of a \textbf{generalized quadrangle} if and only if it is bipartite, has diameter~$4$ and girth~$8$. The \textbf{order} of a finite  generalized quadrangle is the pair $(s, t)$ such that the vertex degrees of the incidence graph of the quadrangle are $s+1$ and $t+1$.  

The following observation is closely related to a  result of W.~Kantor \cite[Theorem~2]{Kantor}. It allows one to recognize when a coset graph (which is automatically bipartite) is the incidence graph of a generalized quadrangle. 

\begin{prop}\label{prop:Kantor}
Let $\mathcal G$ be the coset graph of a group $G$ with respect to a collection  $\{P_i \mid i \in I\}$ of subgroups. 
\begin{enumerate}[(i)]
\item If $P_i \cap P_j = \triv$ for all  distinct $i, j \in I$, then $\mathcal G$ has girth~$\geq 6$. 

\item If $P_iP_j \cap P_k = \triv$ for all distinct $i, j, k \in I$, then $\mathcal G$ has girth~$\geq 8$. 

\item Let $s = | I | -1$ and suppose that $t = |P_i|-1$ for all $i \in I$. If the condition (ii) holds and if in addition $G$ is finite of order $|G| = (1+t)(1+st)$, then $\mathcal G$ is the incidence graph of a generalized quadrangle of order $(s, t)$. 
\end{enumerate}

\end{prop}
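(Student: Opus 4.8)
The plan is to analyze the short cycles of the bipartite graph $\mathcal{G}$ combinatorially and translate each into an algebraic relation among elements of the subgroups $P_i$. Recall the structure: one part of the bipartition is $G$ itself, and each $g \in G$ is joined to exactly one coset in every $G/P_i$, namely $gP_i$, so $g$ has degree $|I|=s+1$; a coset $hP_i$ is joined to its $|P_i|=t+1$ elements, so it has degree $t+1$. As $\mathcal{G}$ is bipartite, all cycles are even, and it suffices to exclude $4$-cycles (for girth $\geq 6$) and $6$-cycles (for girth $\geq 8$). For (i), a $4$-cycle $g_1, h_1P_i, g_2, h_2P_j$ with $g_1\neq g_2$ and distinct cosets gives $g_1^{-1}g_2 \in P_i$ and $g_1^{-1}g_2\in P_j$; if $i=j$ the two cosets both equal $g_1P_i$, a contradiction, so $i\neq j$ and $g_1^{-1}g_2 \in P_i\cap P_j=\triv$, forcing $g_1=g_2$. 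Hence girth $\geq 6$.

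For (ii), I would first note that the hypothesis subsumes that of (i) whenever a third index exists: any $a\in P_i\cap P_j$ satisfies $a = 1\cdot a \in P_kP_j$, so $a\in P_kP_j\cap P_i = \triv$; thus there are no $4$-cycles. It remains to rule out $6$-cycles $g_1, C_1, g_2, C_2, g_3, C_3$ with $C_\ell = h_\ell P_{i_\ell}$. Consecutive cosets share a $G$-vertex, so equal indices would collapse them; hence $i_1, i_2, i_3$ are pairwise distinct. Writing $a = g_1^{-1}g_2 \in P_{i_1}$, $b = g_2^{-1}g_3 \in P_{i_2}$, $c = g_3^{-1}g_1 \in P_{i_3}$, one has $abc=1$, so $c = b^{-1}a^{-1} \in P_{i_2}P_{i_1}$. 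Therefore $c \in P_{i_2}P_{i_1}\cap P_{i_3} = \triv$ by (ii), contradicting $c\neq 1$. Thus girth $\geq 8$.

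For (iii) the strategy is a tight ball-count. The $G$-side has $|G| = (1+t)(1+st)$ vertices of degree $s+1$, and the coset side has $(s+1)|G|/(t+1) = (s+1)(1+st)$ vertices of degree $t+1$; these match the lines and points of a generalized quadrangle of order $(s,t)$. Fixing $g\in G$ and using that girth $\geq 8$ makes the ball of radius $3$ a tree, I would count the cosets at distance $1$ and $3$ from $g$: there are $s+1$ of the former and $(s+1)ts$ of the latter, all distinct, totalling $(s+1)(1+st)$. Since this equals the total number of cosets, every coset lies within distance $3$ of $g$, whence every vertex lies within distance $4$ and $\mathcal{G}$ is connected of diameter $\leq 4$. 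As the $G$-vertices within distance $2$ number only $1+(s+1)t$, there remain $st^2>0$ of them at distance exactly $4$, so the diameter is $4$; comparing the $(s+1)ts\cdot t$ edges descending from the distance-$3$ cosets against these $st^2$ targets shows each is reached $s+1\geq 2$ times, producing $8$-cycles and fixing the girth at $8$. Being bipartite of diameter $4$ and girth $8$, $\mathcal{G}$ is the incidence graph of a generalized quadrangle by the characterization recalled above, and the degrees identify its order as $(s,t)$.

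The cycle bookkeeping in (i) and (ii) is routine once one observes that repeated indices force cosets to coincide; the genuine work is the count in (iii). The argument closes only because the Moore-type bound is met with equality, so the point to get right is that girth $\geq 8$ precludes any coincidence among vertices at distance $\leq 3$, making the enumeration exact rather than merely an upper estimate. I would also take care to track which bipartition class plays the role of points and which of lines, so that the order emerges as $(s,t)$ and not $(t,s)$.
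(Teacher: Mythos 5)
Your proof is correct and takes essentially the same route as the paper, whose entire proof is the remark that it is ``a direct computation similar to the proof of Proposition~\ref{prop:Dihedral}'' --- namely exactly what you did: translate $4$- and $6$-cycles into relations landing in $P_i \cap P_j$ and $P_iP_j \cap P_k$, then use the order hypothesis in a Moore-type count (balls of radius $3$ are trees by girth~$\geq 8$) to force diameter~$4$ and girth exactly~$8$. Your observation that part (ii) needs a third index to rule out $4$-cycles (so implicitly $|I| \geq 3$) is a legitimate fine point that the paper glosses over, though it is harmless in all intended applications where $|I| = p+2$.
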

\begin{proof}
The proof is a direct computation similar to the proof of Proposition~\ref{prop:Dihedral}.
\end{proof}

The following consequence allows one to recover a family of finite generalized quadrangles that is well-known to the experts; it was first discovered by S.~Payne \cite{Payne}. The right choice of $p+2$ cyclic subgroups was recorded in \cite[Theorem~3.8]{Essert}.

\begin{cor}\label{cor:Heisen}
Let $p$ be an odd prime and  $H(\mathbf F_p) = \langle x, y \mid x^p, y^p, [x, z], [y, z] \rangle$ be the Heisenberg group over $\mathbf F_p$, where $z = [x, y]$. Then the coset graph of $H(\mathbf F_p)$ with respect to the collection  $\{\langle x \rangle, \langle z \rangle\} \cup \{\langle x^a y z^{-\frac a 2}\rangle \mid a=0, \dots p-1\} $  of $p+2$ cyclic subgroups of order $p$ is the incidence graph of a generalized quadrangle of order $(p+1, p-1)$. 
\end{cor}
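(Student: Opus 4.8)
The plan is to deduce the statement from Proposition~\ref{prop:Kantor}(iii). Write $G = H(\mathbf{F}_p)$ and let $\{P_i \mid i \in I\}$ be the given collection of $p+2$ subgroups, so that $s = |I|-1 = p+1$. Each of the generators $x$, $z$ and $x^ayz^{-a/2}$ has order $p$ — here the hypothesis that $p$ is \emph{odd} enters, so that $\tfrac12 \in \mathbf{F}_p$ makes sense and $z^{-a/2}$ is well defined — whence every $P_i$ is cyclic of order $p$ and $t = |P_i|-1 = p-1$. The order count is then immediate: $(1+t)(1+st) = p\bigl(1 + (p+1)(p-1)\bigr) = p\cdot p^2 = p^3 = |G|$. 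Thus everything reduces to verifying hypothesis~(ii), namely $P_iP_j \cap P_k = \triv$ for all pairwise distinct $i,j,k \in I$.

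To check (ii), note that any $h \in P_iP_j \cap P_k$ can be written $h = g_ig_j$ with $g_i \in P_i$ and $g_j \in P_j$, whence $g_ig_jh^{-1} = 1$ is a relation whose three factors lie in the three distinct subgroups. It therefore suffices to show that the equation $g_ig_jg_k = 1$, with $g_i \in P_i$, $g_j \in P_j$, $g_k \in P_k$ and $i,j,k$ pairwise distinct, admits only the trivial solution $g_i = g_j = g_k = 1$. I would carry out this computation in the coordinates $x^ay^bz^c$, with $z$ central and the product rule recording a single $z$-correction term. Passing to the abelianization $G/\langle z\rangle \cong \mathbf{F}_p^2$ clarifies the picture: the $p+1$ non-central subgroups $\langle x\rangle$ and $\langle x^ayz^{-a/2}\rangle$ map onto the $p+1$ distinct lines through the origin, while $\langle z\rangle$ maps to the origin. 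The two abelianized coordinate equations coming from $g_ig_jg_k = 1$ only pin the parameters down up to a one-dimensional family (any two distinct lines already span $\mathbf{F}_p^2$), so they alone do not force triviality; the real content lies in the remaining central coordinate.

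The crux — and the step I expect to be the main obstacle — is therefore the $z$-coordinate equation. Substituting the parametrized elements of the three subgroups and eliminating via the abelianized relations, the central equation becomes a quadratic identity in the parameters, and one must check that \emph{with the offset} $z^{-a/2}$ this identity collapses into nonvanishing linear factors (the differences $a-b,\,a-c,\,b-c$ of the distinct subgroup labels) times the parameters, forcing the latter to vanish. I would organize this into the three cases according to whether the triple consists entirely of subgroups of the form $\langle x^ayz^{-a/2}\rangle$, or includes $\langle x\rangle$, or includes the central subgroup $\langle z\rangle$; each is a short direct calculation of the type announced for Proposition~\ref{prop:Kantor}, and the offset $-a/2$ is precisely what makes the three cases close. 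Once (ii) is established, Proposition~\ref{prop:Kantor}(iii) identifies the coset graph $\mathcal G$ as the incidence graph of a generalized quadrangle of order $(s,t) = (p+1, p-1)$, which is the assertion.
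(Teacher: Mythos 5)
Your proposal is correct and takes essentially the same approach as the paper, whose entire proof is the one-line remark that the conditions of Proposition~\ref{prop:Kantor} are readily checked. Your outline — the order count $(1+t)(1+st)=p^3=|G|$, the reduction to condition (ii), and the case analysis on the central coordinate exploiting the offset $z^{-a/2}$ — is precisely that ``readily checked'' computation, and it does close in the way you predict.
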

\begin{proof}
One readily checks that the conditions from Proposition~\ref{prop:Kantor} are satisfied. 
\end{proof}

\section{The spectral criterion for property (T)}

The following criterion for property (T) follows easily   from the main result of \cite{Opp}. 

\begin{prop}\label{prop:T}
Let $X$ be a hyperbolic Fuchsian building of type $(3, 4, 4)$ and $\Gamma$ be a discrete group acting propertly, cocompactly on $X$ by automorphisms. Assume that the projective plane residues of $X$ have order $p+1$ and that the generalized quadrangle residues of $X$ have order $(p+1, p-1)$. If $p \geq 6$,  then $\Gamma$ has Kazdhan's property (T). 
\end{prop}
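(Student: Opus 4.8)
The plan is to invoke the spectral criterion of I.~Oppenheim \cite{Opp}, which reduces the verification of property~(T) for a group acting properly and cocompactly on a $2$-dimensional simplicial complex to a uniform lower bound on the spectral gap of the vertex links. Concretely, writing $\lambda_1$ for the smallest positive eigenvalue of the normalized Laplacian of the link of a vertex, it suffices to check that each link is connected and that $\lambda_1 \ge \tfrac12$, uniformly over the finitely many $\Gamma$-orbits of vertices. First I would identify the graphs that occur as links. Since $X$ is of type $(3,4,4)$, each vertex link is a rank-one spherical building, i.e. the incidence graph of a generalized polygon, and by hypothesis these are the incidence graphs of the projective plane residues (of order $p+1$) and of the generalized quadrangle residues (of order $(p+1,p-1)$); all of them are connected, so that part of the hypothesis is automatic.

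The second step is the computation of the relevant eigenvalues, which is classical \cite{FH}. For the incidence graph of a projective plane of order $q = p+1$, the graph is $(q+1)$-regular with adjacency spectrum $\{\pm(q+1), \pm\sqrt{q}\}$; its second largest normalized adjacency eigenvalue is therefore $\sqrt{q}/(q+1)$, which is $<\tfrac12$ for every $q\ge 2$ because $(\sqrt{q}-1)^2>0$. So the projective plane links satisfy $\lambda_1 > \tfrac12$ and are never the binding constraint. For the incidence graph of a generalized quadrangle of order $(s,t)$ I would use that it is bipartite and biregular, the point side having degree $t+1$ and the line side degree $s+1$. If $N$ denotes the incidence matrix, then $NN^{T} = (t+1)I + A_P$, where $A_P$ is the adjacency matrix of the collinearity graph of the quadrangle; the latter is strongly regular with eigenvalues $s(t+1), s-1, -(t+1)$, so $NN^{T}$ has eigenvalues $(s+1)(t+1), s+t, 0$. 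Normalizing by $D^{-1/2}AD^{-1/2}$ divides the singular values of $N$ by $\sqrt{(s+1)(t+1)}$, so the second largest normalized eigenvalue equals $\sqrt{\tfrac{s+t}{(s+1)(t+1)}}$.

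The third step is to substitute $(s,t) = (p+1,p-1)$ and compare with the threshold $\tfrac12$. Here $s+t = 2p$ and $(s+1)(t+1) = p(p+2)$, so the squared second eigenvalue is $\tfrac{2}{p+2}$, and this is $\le\tfrac14$ precisely when $p\ge 6$. Equivalently $\lambda_1\ge\tfrac12$ for every quadrangle link as soon as $p\ge6$, while $\lambda_1>\tfrac12$ for the projective plane links for all $p$. Feeding both bounds into \cite{Opp} then gives property~(T).

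The step I expect to be the main obstacle is the borderline value $p=6$, at which the quadrangle links have $\lambda_1$ equal to exactly $\tfrac12$. The classical Ballmann--\'Swi\k{a}tkowski/\.Zuk criterion requires the strict inequality $\lambda_1 > \tfrac12$ and so just fails to apply there; the whole point of appealing to \cite{Opp} is that Oppenheim's criterion is sharp enough to cover the non-strict case $\lambda_1 \ge \tfrac12$. I would therefore be careful to quote \cite{Opp} in its precise (non-strict) form, and to treat the biregularity of the quadrangle links correctly through the normalization $D^{-1/2}AD^{-1/2}$ rather than through a naive $d$-regular normalization, since the two sides of those links carry the different degrees $p$ and $p+2$.
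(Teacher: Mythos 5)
Your identification of the links and your spectral computations are correct and agree with the values the paper quotes from \cite{FH}: the projective plane links have smallest positive normalized Laplacian eigenvalue $\lambda_P = 1 - \frac{\sqrt{p+1}}{p+2} > \frac12$ for all $p$, and the quadrangle links of order $(p+1,p-1)$ have $\lambda_Q = 1 - \sqrt{\frac{2}{p+2}}$, which is $\geq \frac12$ exactly when $p \geq 6$ (your $NN^{T}$ computation via the collinearity graph is a correct derivation of this). The gap is the criterion into which you feed these numbers. You assert that Oppenheim's theorem reduces property (T) to ``each link connected and $\lambda_1 \geq \frac12$ uniformly,'' i.e.\ to a non-strict pointwise version of the \.Zuk/Ballmann--\'Swiatkowski condition. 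No such theorem can exist: $\mathbf{Z}^2$ acts properly and cocompactly on the equilateral triangulation of the Euclidean plane, every vertex link there is a $6$-cycle, which is connected with $\lambda_1 = 1-\cos(\pi/3) = \frac12$ exactly, and $\mathbf{Z}^2$ does not have property (T). So your argument has no valid bridge from the eigenvalue bounds to (T), and it breaks precisely at the boundary case $p = 6$ that you yourself single out as the main obstacle.

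What \cite[Theorem~1]{Opp} actually provides --- and what the paper uses --- is an \emph{averaging} criterion over the three vertices of each chamber, not a pointwise relaxation of strictness. In a Fuchsian building of type $(3,4,4)$ each triangle has one vertex with a projective plane link and two with quadrangle links, and the criterion consists of the two strict inequalities $\lambda_P + 2\lambda_Q > \frac32$ and $(\lambda_P + \lambda_Q - 1)^2 + 2(\lambda_P + \lambda_Q - 1)(2\lambda_Q - 1) > 0$. At $p = 6$ these hold because the strict surplus $\lambda_P - \frac12 > 0$ of the projective plane link compensates for the quadrangle links sitting exactly at $\frac12$: the second condition degenerates there to $(\lambda_P + \lambda_Q - 1)^2 > 0$. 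Note also that the threshold $p \geq 6$ does not come from where your pointwise reading locates it: the first (averaging) condition already holds at $p = 5$, and it is the second, quadratic condition that fails at $p = 5$; the numerical agreement of your cutoff with the paper's is a coincidence of this particular family, since $2\lambda_Q - 1$ changes sign at $p=6$. To repair your proof, keep your eigenvalue computations and replace the pointwise criterion by a verification of these two inequalities for $p \geq 6$.
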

\begin{proof}
We recall from \cite{FH} that the smallest positive eigenvalue of the Laplacian of the incidence graph of a projection plane of order $p+1$ (resp. a generalized quadrangle of order $(p+1, p-1)$) is $\lambda_P = 1 - \frac{\sqrt{p+1}}{p+2}$ (resp. $\lambda_Q = 1- \sqrt{\frac 2 {p+2}}$). By \cite[Theorem~1]{Opp}, the group $\Gamma$ has property (T) provided that the following two conditions hold:
\begin{itemize}
\item $\lambda_P + 2 \lambda_Q > 3/2$,

\item $(\lambda_P + \lambda_Q -1)^2 + 2(\lambda_P + \lambda_Q-1)(2\lambda_Q -1) >0$.
\end{itemize}
A straightforward computation shows that the first condition holds for all integer $p \geq 5$, while the second holds for all $p \geq 6$.   
\end{proof}

\section{Variations on the same theme}

There is a certain amount of flexibility in the construction of the group $E$ which can be exploited to provide many more infinite hyperbolic Kazhdan groups similar to $E$. The vertex groups $E_{c_i}$ need not be cyclic: they could also be chosen to be the dihedral group  $D_{14}$ of order~$14$. One could also permute the edge groups $E_{ac_i}$ arbitrarily without changing $E_{bc_i}$. The specific choice  for the group $E$ in Theorem~\ref{thm:main} was made in order to minimize the maximal length of a relation. 

Let us note that one can also obtain larger siblings of $E$ as follows. 
For any Mersenne prime $p$, define a simple complex of groups consisting of  $p+2$ hyperbolic triangles of type $(2, 4, 6)$ glued along their hypothenuse. The two acute vertex groups are  a Heisenberg group over $\mathbf F_p$ and a dihedral group $D_{2n}$ of order $2n$, where $n = (p+1)^2 + p+2$, respectively. The other $p+2$ vertex groups are cyclic or dihedral of order $2p$. The edge groups are chosen using Proposition~\ref{prop:Dihedral} and Corollary~\ref{cor:Heisen} so that the Link Condition is satisfied at every vertex. We need $p$ to be a Mersenne prime since $p+1$ must be a prime power for Proposition~\ref{prop:Dihedral}  to apply. The fundamental group of that complex is always hyperbolic, and it has property (T) for all $p \geq 7$ by Proposition~\ref{prop:T}.

We finish this note by recording   another observation that follows from combining Proposition~\ref{prop:Dihedral} with M.~Bourdon's construction from \cite[\S1.5.3]{Bourdon} and its extension due to J.~Swiatkowski \cite{Swiat}. 

\begin{prop}\label{prop:nonDesarguesian}
Let $L$ be the incidence graph of a finite generalized $n$-gon of order $(s, t)$ with $n \geq 3$. Assume that $t$ is a prime power. 

Then there is a group $\Gamma$ acting faithfully, properly and cocompactly (but not type preservingly) on a thick locally finite triangle building $X$ of type $(3, n, n)$ admitting $L$ as the link of a vertex. 
\end{prop}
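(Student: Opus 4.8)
The plan is to imitate the proof of Theorem~\ref{thm:main} step by step, replacing the generalized quadrangle carried by the base vertices by the given polygon $L$, and to obtain the building of type $(3,n,n)$ as the ``un-subdivision'' of an auxiliary building of type $(2,n,6)$. First I would carry out the order bookkeeping forced by the chamber system. Write $(s,t)$ for the order of $L$. In a triangle building of type $(3,n,n)$ the two base vertices (angle $\pi/n$) carry a generalized $n$-gon as link, while the apex vertex (angle $\pi/3$) carries a projective plane; matching vertex degrees along the two panel types meeting at each vertex forces the apex link to be a projective plane of order $t$, the leg panels to have thickness $t+1$, and the base panels thickness $s+1$. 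Thus the construction can only succeed if a projective plane of order $t$ exists, which is precisely what the hypothesis that $t$ is a prime power guarantees.

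The geometric input is that cutting a triangle with angles $\pi/3,\pi/n,\pi/n$ along the median issued from its apex yields two triangles with angles $\pi/6,\pi/2,\pi/n$ (Euclidean when $n=3$, hyperbolic when $n\geq 4$). I would therefore build a complex of groups $\mathcal Z$ whose chambers are triangles of type $(2,n,6)$, designed so that its development $\widetilde{\mathcal Z}$ is a building in which the link of each $\pi/6$-vertex is the first barycentric subdivision of the incidence graph of the Desarguesian projective plane of order $t$, the link of each $\pi/n$-vertex is $L$, and the link of each $\pi/2$-vertex is the complete bipartite graph $K_{2,s+1}$. For the apex links I would place the dihedral group $D_{2m}$ with $m=t^2+t+1$ at the $\pi/6$-vertices and invoke Proposition~\ref{prop:Dihedral}, which realizes exactly that subdivided incidence graph as a coset graph; for the realization of $L$ at the base vertices, and for the assembly of $\mathcal Z$ as a whole, I would use the constructions of Bourdon \cite[\S1.5.3]{Bourdon} and of Swiatkowski \cite{Swiat}, which supply the group-theoretic data realizing an arbitrary finite generalized polygon as a vertex link of a developable complex of groups.

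I would then run the verification of Theorem~\ref{thm:main}: check the Link Condition at every vertex (girth~$12$ at the $\pi/6$-vertices by Proposition~\ref{prop:Dihedral}, girth~$2n$ at the $\pi/n$-vertices since $L$ is a generalized $n$-gon, girth~$4$ at the $\pi/2$-vertices), deduce from \cite[Theorem~II.12.28]{BH} that $\mathcal Z$ is developable with a CAT($0$) — indeed CAT($-1$) when $n\geq 4$ — universal cover, and from \cite[Theorem~1]{Tits} that $\widetilde{\mathcal Z}$ is a building of type $(2,n,6)$. Finally I would discard from $\widetilde{\mathcal Z}$ the edges lying in exactly two chambers, which are precisely the median edges: collapsing across them merges each pair of $(2,n,6)$-triangles back into a $(3,n,n)$-triangle and un-subdivides the apex links into projective planes, producing a thick locally finite building $X$ of type $(3,n,n)$ admitting $L$ as the link of each base vertex. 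The group $\Gamma=\pi_1(\mathcal Z)$ acts faithfully, properly and cocompactly on $\widetilde{\mathcal Z}$, hence on $X$, and the collapse destroys type-preservation exactly as in Theorem~\ref{thm:main}.

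The hard part will be the assembly described in the second paragraph: arranging the vertex and edge groups of $\mathcal Z$ so that the asymmetric order $(s,t)$ of $L$ is compatible with the necessarily symmetric projective-plane link at the apex, and so that removing the thin edges yields a genuine thick building rather than merely a complex. This is exactly where the flexibility of the Bourdon--Swiatkowski constructions must be married to the explicit dihedral realization of Proposition~\ref{prop:Dihedral}; verifying that the thin-edge removal produces a thick building relies on the digon links $K_{2,s+1}$ being complete bipartite, which is what makes the un-subdivision clean, together with $s,t\geq 2$ to guarantee thickness.
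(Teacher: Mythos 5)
Your strategy coincides with the paper's: chambers of type $(2,n,6)$, dihedral groups of order $2(t^2+t+1)$ at the $\pi/6$-vertices via Proposition~\ref{prop:Dihedral}, links $K_{2,s+1}$ at the $\pi/2$-vertices and $L$ at the $\pi/n$-vertices, development via \cite[Theorem~II.12.28]{BH} and \cite[Theorem~1]{Tits}, then removal of the thin edges to merge pairs of chambers into $(3,n,n)$-triangles. However, what you yourself label ``the hard part'' --- the actual definition of the complex of groups $\mathcal Z$ --- is never carried out, and that definition is the entire content of the proof; deferring the construction of the central object to ``the flexibility of the Bourdon--Swiatkowski constructions'' leaves a genuine gap. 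You also misplace where the group theory is needed: you propose to invoke \cite{Bourdon} and \cite{Swiat} ``for the realization of $L$ at the base vertices'', but in the correct construction no group-theoretic data at all is required to realize $L$ as a link.

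The gap is closed by taking the underlying complex $\mathcal Y$ to be the simplicial cone over $L$ (this is the construction of \cite[\S5.3]{Swiat}, which the paper follows). Write $V_1 \cup V_2$ for the bipartition of the vertex set of $L$, with degrees $s+1$ on $V_1$ and $t+1$ on $V_2$. The cone point carries the trivial group, so its link in the development is $L$ itself, for free: the $\pi/n$-vertices of the final building are exactly the lifts of the cone point. Each $w \in V_2$ carries the dihedral group of order $2(t^2+t+1)$, with the groups of the incident edges of $L$ being order-$2$ subgroups chosen as in Proposition~\ref{prop:Dihedral}(ii), so that the link at $w$ is the first barycentric subdivision of the incidence graph of the Desarguesian plane of order $t$ (girth $12$, angle $\pi/6$); this is the only place where the prime power hypothesis on $t$ enters. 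Each $v \in V_1$ carries a group of order $2$ identified with all of its incident edge groups, so that its link is $K_{2,s+1}$ (girth $4$, angle $\pi/2$). The cone edges and the triangles carry trivial groups. With $\mathcal Y$ so defined, your verification paragraph goes through essentially verbatim, and the worry about the ``asymmetric order $(s,t)$'' dissolves: the $V_1$-side of $L$, of degree $s+1$, meets only the $\pi/2$-vertices, which disappear into edge interiors after the merge and leave panels of thickness $s+1$, while the $V_2$-side, of degree $t+1$, becomes the set of $\pi/3$-vertices, whose links are projective planes of order $t$ and hence symmetric, exactly as your bookkeeping demands.
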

\begin{proof}
	We follow the construction described in \cite[\S5.3]{Swiat} in order to build $\Gamma$ as the fundamental group of a simple complex of finite groups. The underlying complex $\mathcal Y$ is the simplicial cone over the graph $L$. Let $V = V_1 \cup V_2$ be the bipartition of the vertex set of $L$, so that every edge in $L$ joins a vertex in $V_1$ to a vertex in $V_2$, every vertex in $V_1$ has degree $s+1$ and every vertex in $V_2$ has degree $t+1$. To each vertex $v$ in $V_2$, we define the vertex group $\Gamma_v$ as a dihedral group of order $2(t^2 +t +1)$. To each edge $e$   belonging to the set $E_L(v)$ of edges of $L$  emanating from $v$, we define $\Gamma_e$ as a cyclic group of order $2$. For all $e \in E_L(v)$ we define the inclusion of $\Gamma_e$ into $\Gamma_v$ in such a way that the coset graph of $\Gamma_v$ with respect to $\{\Gamma_e \mid e \in E_L(v)\}$ is the first barycentric subdivision of the incidence graph of the Desarguesian projective plane of order $t$. Such a choice is possible in view of Proposition~\ref{prop:Dihedral}; this is where we use the hypothesis that $t$ is a prime power. For $v \in V_1$ we define the vertex group $\Gamma_v$ to be cyclic of order~$2$, and identify $\Gamma_v$ with all edge groups $\Gamma_e$ with $e \in E_L(v)$. The groups attached to all the other simplices of $\mathcal Y$ are trivial. By \cite[Theorem~II.12.28]{BH}, the simple complex of groups defined in this way is developable. By \cite[Theorem~1]{Tits}, the univseral cover $\tilde{\mathcal Y}$ is a non-thick triangle building of type $(2, 6, n)$. Upon discarding the edges of $\tilde{\mathcal Y}$ that cover edges of $L$, we may view $\tilde{\mathcal Y}$ is a thick triangle building of type $(3, n, n)$ on which $\Gamma$ acts faithfully, properly and cocompactly, but not type-preservingly. 
\end{proof}	

The difference between  Bourdon's construction \cite[\S1.5.3]{Bourdon} and Proposition~\ref{prop:nonDesarguesian} is that the former yields Fuchsian buildings of type $(2,n,n)$. 

\begin{rmk}
 Proposition~\ref{prop:nonDesarguesian} comes close to a solution of a problem posed by W.~Kantor \cite[Problem~C.6.7]{Kantor_GABs}. It notably implies that,  all finite projective planes satisfying the   Prime Power Conjecture  appear as residue planes in $\tilde A_2$-buildings admitting a discrete cocompact group of automorphisms. In particular, all known non-Desarguesian finite projective planes do. This provides a construction of an infinite family of cocompact lattices in \emph{exotic} $\tilde A_2$-buildings of arbitrarily large thickness, where \textbf{exotic} means non-isomorphic to the Bruhat--Tits building of a simple algebraic group over a local field. In particular, the main result of \cite{BCL} applies to those lattices, which ensures that they do not admit any finite-dimensional representation with infinite image over any field. The first construction of an infinite family of cocompact lattices in  {exotic} $\tilde A_2$-buildings was obtained in \cite[Appendix~D]{BCL}; since then another source of cocompact lattices in exotic $\tilde A_2$-buildings of arbitrarily large thickness has been identified by N.~Radu \cite{Radu_PhD}. The first example of a cocompact lattice in an $\tilde A_2$-building admitting non-Desarguesian residue planes is due to him \cite{Radu}.  That example remains the only known $\tilde A_2$-building with a cocompact lattice where \emph{all} residue planes are non-Desarguesian. 
\end{rmk}

\section*{Acknowledgements}
I  thank the Isaac Newton Institute for Mathematical Sciences, Cambridge for support and hospitality during the programme \textit{Non-positive curvature group actions and cohomology} where part of the work on this paper was accomplished. I am grateful to Cornelia Drutu for asking me the question that initiated this note,  to Marc Bourdon for explaining me his inspiring construction from \cite[\S1.5.3]{Bourdon}, and to U.~Bader, M.~Kapovich and H.~Wilton for their comments on an earlier version of the manuscript.



\begin{thebibliography}{BdlHV08}
	
	\bibitem[AFW15]{AFW}
	Matthias Aschenbrenner, Stefan Friedl, and Henry Wilton, \emph{3-manifold
		groups}, EMS Series of Lectures in Mathematics, European Mathematical Society
	(EMS), Z\"urich, 2015. \MR{3444187}
	
	\bibitem[BCL]{BCL}
	Uri Bader, Pierre-Emmanuel Caprace, and Jean L\'ecureux, \emph{On the linearity
		of lattices in affine buildings and ergodicity of the singular cartan flow},
	Preprint, arXiv:1608.06265.
	
	\bibitem[BdlHV08]{BHV}
	Bachir Bekka, Pierre de~la Harpe, and Alain Valette, \emph{Kazhdan's property
		({T})}, New Mathematical Monographs, vol.~11, Cambridge University Press,
	Cambridge, 2008. \MR{2415834}
	
	\bibitem[BH99]{BH}
	Martin~R. Bridson and Andr\'e Haefliger, \emph{Metric spaces of non-positive
		curvature}, Grundlehren der Mathematischen Wissenschaften [Fundamental
	Principles of Mathematical Sciences], vol. 319, Springer-Verlag, Berlin,
	1999. \MR{1744486}
	
	\bibitem[Bor63]{Borel}
	Armand Borel, \emph{Compact {C}lifford-{K}lein forms of symmetric spaces},
	Topology \textbf{2} (1963), 111--122. \MR{0146301}
	
	\bibitem[Bou00]{Bourdon}
	Marc Bourdon, \emph{Sur les immeubles fuchsiens et leur type de
		quasi-isom\'etrie}, Ergodic Theory Dynam. Systems \textbf{20} (2000), no.~2,
	343--364. \MR{1756974}
	
	\bibitem[BS97]{BaSw}
	W.~Ballmann and J.~\'Swiatkowski, \emph{On {$L^2$}-cohomology and property
		({T}) for automorphism groups of polyhedral cell complexes}, Geom. Funct.
	Anal. \textbf{7} (1997), no.~4, 615--645. \MR{1465598}
	
	\bibitem[Dem68]{Dembowski}
	P.~Dembowski, \emph{Finite geometries}, Ergebnisse der Mathematik und ihrer
	Grenzgebiete, Band 44, Springer-Verlag, Berlin-New York, 1968. \MR{0233275}
	
	\bibitem[DK17]{DK}
	Cornelia Drutu and Michael Kapovich, \emph{Geometric group theory}, Book to
	appear, 2017.
	
	\bibitem[Ess13]{Essert}
	Jan Essert, \emph{A geometric construction of panel-regular lattices for
		buildings of types {$\tilde A_2$} and {$\tilde C_2$}}, Algebr. Geom. Topol. \textbf{13}
	(2013), no.~3, 1531--1578. \MR{3071135}
	
	\bibitem[FH64]{FH}
	Walter Feit and Graham Higman, \emph{The nonexistence of certain generalized
		polygons}, J. Algebra \textbf{1} (1964), 114--131. \MR{0170955}
	
	\bibitem[Fuj99]{Fujiwara}
	Koji Fujiwara, \emph{3-manifold groups and property {$T$} of {K}azhdan}, Proc.
	Japan Acad. Ser. A Math. Sci. \textbf{75} (1999), no.~7, 103--104.
	\MR{1729853}
	
	\bibitem[Gro87]{Gromov}
	M.~Gromov, \emph{Hyperbolic groups}, Essays in group theory, Math. Sci. Res.
	Inst. Publ., vol.~8, Springer, New York, 1987, pp.~75--263. \MR{919829}
	
	\bibitem[Kan80]{Kantor}
	William~M. Kantor, \emph{Generalized quadrangles associated with {$G_{2}(q)$}},
	J. Combin. Theory Ser. A \textbf{29} (1980), no.~2, 212--219. \MR{583960}
	
	\bibitem[Kan86]{Kantor_GABs}
	\bysame, \emph{Generalized polygons, {SCAB}s and {GAB}s}, Buildings and the
	geometry of diagrams ({C}omo, 1984), Lecture Notes in Math., vol. 1181,
	Springer, Berlin, 1986, pp.~79--158. \MR{843390}
	
	\bibitem[Kaz67]{Kazhdan}
	D.~A. Kazhdan, \emph{On the connection of the dual space of a group with the
		structure of its closed subgroups}, Funkcional. Anal. i Prilo\v zen.
	\textbf{1} (1967), 71--74. \MR{0209390}
	
	\bibitem[Kos75]{Kostant}
	Bertram Kostant, \emph{On the existence and irreducibility of certain series of
		representations}, 231--329. \MR{0399361}
	
	\bibitem[LMW]{LMW}
	Alex Lubotzky, Jason Manning, and Henry Wilton, \emph{Generalized triangle
		groups, expanders, and a problem of {A}gol and {W}ise}, Preprint,
	arXiv:1702.08200.
	
	\bibitem[Opp15]{Opp}
	Izhar Oppenheim, \emph{Property ({T}) for groups acting on simplicial complexes
		through taking an ``average'' of {L}aplacian eigenvalues}, Groups Geom. Dyn.
	\textbf{9} (2015), no.~4, 1131--1152. \MR{3428410}
	
	\bibitem[Pay71]{Payne}
	Stanley~E. Payne, \emph{Nonisomorphic generalized quadrangles}, J. Algebra
	\textbf{18} (1971), 201--212. \MR{0279672}
	
	\bibitem[Rad17a]{Radu}
	Nicolas Radu, \emph{A lattice in a residually non-{D}esarguesian
		{$\tilde A_2$}-building}, Bull. Lond. Math. Soc. \textbf{49} (2017), no.~2,
	274--290. \MR{3656296}
	
	\bibitem[Rad17b]{Radu_PhD}
	\bysame, \emph{Ph{D} thesis}, in preparation, 2017.
	
	\bibitem[\'S01]{Swiat}
	Jacek \'Swiatkowski, \emph{Some infinite groups generated by involutions have
		{K}azhdan's property ({T})}, Forum Math. \textbf{13} (2001), no.~6, 741--755.
	\MR{1861247}
	
	\bibitem[Tit81]{Tits}
	J.~Tits, \emph{A local approach to buildings}, The geometric vein, Springer,
	New York-Berlin, 1981, pp.~519--547. \MR{661801}
	
	\bibitem[Xie06]{Xie}
	Xiangdong Xie, \emph{Quasi-isometric rigidity of {F}uchsian buildings},
	Topology \textbf{45} (2006), no.~1, 101--169. \MR{2170496}
	
\end{thebibliography}
\providecommand{\bysame}{\leavevmode\hbox to3em{\hrulefill}\thinspace}
\providecommand{\MR}{\relax\ifhmode\unskip\space\fi MR }
\providecommand{\MRhref}[2]{%
	\href{http://www.ams.org/mathscinet-getitem?mr=#1}{#2}
}
\providecommand{\href}[2]{#2}

\end{document}